\newtheorem{thm}{Theorem}[section]
\newtheorem{cor}[thm]{Corollary}
\newtheorem{lemma}[thm]{Lemma}
\newtheorem{ex}{Example}[section]
\theoremstyle{definition}
\theoremstyle{remark}
\newcommand{\pc}{\rho}
\newcommand{\B}{\mathcal{B}}
\newcommand{\STS}{\mathcal{S}}
\newcommand{\zed}{\mathbb{Z}}
\begin{document}
\date{\today}
\title{On partial parallel classes in partial Steiner triple systems}
\author{Douglas R.\ Stinson\thanks{D.R.\ Stinson's research is supported by  NSERC discovery grant RGPIN-03882.
}\\David R.\ Cheriton School of Computer Science\\University of Waterloo\\Waterloo, Ontario, N2L 3G1, Canada}

\maketitle

\begin{abstract}
For an integer $\pc$ such that $1 \leq \pc \leq v/3$, define $\beta(\pc,v)$ to be the maximum number of blocks in any partial Steiner triple system on $v$ points in which the maximum partial parallel class has size $\pc$. We  obtain lower bounds on 
$\beta(\pc,v)$ by giving explicit constructions, and upper bounds on $\beta(\pc,v)$ result from counting arguments. We show that
$\beta(\pc,v) \in \Theta (v)$ if $\pc$ is a constant, and $\beta(\pc,v) \in \Theta (v^2)$ if $\pc = v/c$, where $c$ 
is a constant. When $\pc$ is a constant, our upper and lower bounds on $\beta(\pc,v)$ differ by a constant that 
depends on $\pc$. Finally, we apply our results on $\beta(\pc,v)$ to obtain infinite classes of sequenceable partial Steiner triple systems.
\end{abstract}

\section{Introduction}

Suppose $v$ is a positive integer. A \emph{partial Steiner triple system of order $v$}, denoted PSTS$(v)$, is a pair $\STS = (X, \B)$, where $X$ is a set of $v$ \emph{points} and $\B$ is a set of
 3-subsets of $X$, called \emph{blocks}, such that every pair of points occurs in at most one block. The number of blocks in 
 $\B$ is usually denoted by $b$. 
 If every pair of points occurs in exactly one block, 
 the PSTS$(v)$ is a  \emph{Steiner triple system of order $v$}, denoted STS$(v)$.
 It is clear that a PSTS$(v)$ is an STS$(v)$ if and only if $b = v(v-1)/6$. An STS$(v)$ exists if and only if $v \equiv 1,3 \bmod 6$. 
Colbourn and Rosa \cite{CR} is the definitive reference for 
Steiner triple systems and related structures. 

Let $D(v)$ denote the maximum number of blocks in a PSTS$(v)$.
The following theorem gives the values of $D(v)$ for all positive integers $v$.
\begin{thm}
\label{packing.thm}
Suppose $v \geq 3$. Then
\[ D(v) =
\begin{cases}
\left\lfloor  \frac{v}{3} \left\lfloor  \frac{v-1}{2} \right\rfloor \right\rfloor & \text{if $v\not\equiv 5 \bmod 6$}\\
\left\lfloor  \frac{v}{3} \left\lfloor  \frac{v-1}{2} \right\rfloor \right\rfloor - 1 & \text{if $v \equiv 5 \bmod 6$.}
\end{cases}
\]
\end{thm}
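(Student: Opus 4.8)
The plan is to prove the two inequalities $D(v)\le R(v)$ and $D(v)\ge R(v)$ separately, where $R(v)$ denotes the right-hand side of the asserted formula. The upper bound is a short counting argument; the lower bound is the substantive half, and amounts to exhibiting, for every residue of $v$ modulo $6$, a PSTS$(v)$ with exactly $R(v)$ blocks -- this is the classical maximum-packing problem for triples.

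For the upper bound, let $\STS=(X,\B)$ be a PSTS$(v)$ with $b$ blocks, and for $x\in X$ let $r_x$ be the number of blocks containing $x$. Since the $2r_x$ pairs through $x$ lying in blocks are distinct, $2r_x\le v-1$, so $r_x\le\lfloor(v-1)/2\rfloor$; summing over $x$ gives $3b=\sum_x r_x\le v\lfloor(v-1)/2\rfloor$, hence $b\le\lfloor\tfrac v3\lfloor\tfrac{v-1}{2}\rfloor\rfloor$. This already settles the bound when $v\not\equiv 5\bmod 6$. When $v\equiv 5\bmod 6$, write $v=6k+5$, so $\lfloor(v-1)/2\rfloor=3k+2$ and $\lfloor\tfrac v3\lfloor\tfrac{v-1}{2}\rfloor\rfloor=6k^2+9k+3$. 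If equality held, then $\sum_{x}\bigl(3k+2-r_x\bigr)=v(3k+2)-3b=1$, so exactly one point $x_0$ would have $r_{x_0}=3k+1$ while every other point $x$ would have $r_x=3k+2$; but then the $3k+2$ blocks through such an $x$ already cover all $v-1=6k+4$ pairs at $x$, so no pair at $x$ lies outside a block, while $x_0$ lies in exactly $(v-1)-2r_{x_0}=2$ pairs outside blocks. Hence the \emph{leave}, the graph on $X$ whose edges are the pairs in no block, would consist of a single vertex of degree $2$ together with isolated vertices, which is impossible. Therefore $b\le 6k^2+9k+2=\lfloor\tfrac v3\lfloor\tfrac{v-1}{2}\rfloor\rfloor-1$.

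For the lower bound one must construct, for each $v$, a packing of the edges of $K_v$ by triangles whose leave has the smallest size consistent with the parities of the vertex degrees: the empty graph when $v\equiv 1,3\bmod 6$, a perfect matching when $v\equiv 0,2\bmod 6$, a copy of $K_{1,3}$ together with a perfect matching on the remaining points when $v\equiv 4\bmod 6$, and a $4$-cycle when $v\equiv 5\bmod 6$; in each case a short computation confirms that such a packing has exactly $R(v)$ blocks. The residues $1,3\bmod 6$ are immediate from the existence of an STS$(v)$ quoted above. For the remaining residues I would invoke the classical solution of the maximum-packing problem (due to Hanani); a self-contained argument proceeds by small local modifications of Steiner triple systems of nearby admissible orders, supplemented by a few direct constructions for small $v$. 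I expect this to be the main obstacle: simply deleting one or a few points from an STS$(v')$ removes too many blocks and leaves a graph far larger than the target, so the constructions have to be organized more carefully (typically through suitable group-divisible or pairwise-balanced designs), and the smallest orders must be checked by hand. As this is a standard result, one could instead cite \cite{CR} for the constructions and give only the counting upper bound in detail.
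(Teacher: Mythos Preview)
The paper does not actually prove Theorem~\ref{packing.thm}; it is stated there without proof as the classical packing-number formula and is used only as background input (e.g., in Theorem~\ref{construction2} and in bounding $\beta_U-\beta_L$). So there is no ``paper's own proof'' to compare against.

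That said, your outline is the standard argument and is correct. The counting bound $b\le\lfloor\tfrac v3\lfloor\tfrac{v-1}{2}\rfloor\rfloor$ is immediate, and your refinement for $v\equiv 5\pmod 6$ is right: if the bound were met with equality the leave would have exactly one vertex of degree~$2$ and all others of degree~$0$, which is impossible in a simple graph. Your description of the optimal leaves in each residue class is accurate (empty; perfect matching; $K_{1,3}$ plus a matching; $4$-cycle), and you are correct that the existence side is precisely the maximum triple-packing theorem. Since the paper itself is content to quote this result, your suggestion to cite \cite{CR} (or the original papers of Sch\"onheim and Spencer/Hanani) for the constructions, while writing out only the counting upper bound, is entirely in keeping with how the paper treats it.
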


Suppose $\STS = (X, \B)$ is a PSTS$(v)$. For any point $x \in X$, the \emph{degree of $x$}, 
denoted $d_x$, is the number of blocks in $\B$ that contain $x$. 
A PSTS$(v)$ is \emph{$d$-regular} if $d_x = d$ for all $x \in X$.
It is clear that $d_x \leq (v-1)/2$ for all $x$. Also, an STS$(v)$ is a $d$-regular PSTS$(v)$ with $d = (v-1)/2$.

Suppose $\STS = (X, \B)$ is a PSTS$(v)$. A \emph{parallel class} in $\STS$ is a set of $v/3$ disjoint blocks in $\B$.
A \emph{partial parallel class} (or \emph{PPC}) in $\STS$  is any set of disjoint blocks in $\B$.
The \emph{size} of a PPC is the number of blocks in the PPC. A PPC in $\STS$ of size $\pc$  is \emph{maximum} if there does not exist a PPC in $\STS$ of size $\pc +1$.



There has been considerable study concerning the sizes of maximum partial parallel classes in STS$(v)$. It seems to be quite difficult to find STS$(v)$ with $v\equiv 3 \bmod 6$ that do not have parallel classes. It was only in 2015 that the first infinite classes were found, by Bryant and Horsley \cite{BH}. For STS$(v)$ with $v\equiv 1 \bmod 6$ that do not have partial parallel classes of size $(v-1)/3$, there are two infinite classes known, one of which was found by Wilson (see \cite{RC})
and one discovered by Bryant and Horsley \cite{BH2}.

There are also lower bounds on the sizes of maximum partial parallel classes in STS$(v)$.  The first nontrivial bound is due to Lindner and Phelps \cite{LP}, who proved in 1978 that every STS$(v)$ with $v>27$ has a partial parallel class of size at least $\frac{v-1}{4}$. Improvements have been found by Woolbright \cite{Wo} and
Brouwer \cite{Br}. The result proven in \cite{Br} is that every  STS$(v)$ has a PPC of size at least
$\frac{v}{3} - \frac{5}{3}v^{2/3}$.  In 1997, Alon, Kim and Spencer \cite{AKS} used probabilistic methods to prove that any STS$(v)$ contains a PPC of size at least $\frac{v}{3} - O(v^{1/2}(\ln v)^{3/2})$. Also, a recent preprint by 
Keevash, Pokrovskiy, Sudakov and Yepremyan \cite{KPSY} improves this lower bound to 
$\frac{v}{3} - O\left(\frac{\log v}{ \log\log v}\right)$.

We should also mention Brouwer's conjecture, made in \cite{Br}, that any STS$(v)$ has a PPC of size at least $(v-4)/3$.

The problem of determining lower bounds on the the sizes of maximum partial parallel classes 
in partial Steiner triple systems has received less study. It was shown in 
\cite{AKS} that a $d$-regular PSTS$(v)$ has a PPC of size at least 
$\frac{v}{3} - O\left( \frac{v \, (\ln d)^{3/2}} { d^{1/2}}\right)$.

In this paper, we address the following problem. For an integer $\pc$ such that $1 \leq \pc \leq v/3$, define $\beta(\pc,v)$ to be the maximum number of blocks in any PSTS$(v)$ in which the maximum partial parallel class has size $\pc$. We  obtain lower bounds on 
$\beta(\pc,v)$ by giving explicit constructions, and upper bounds result from counting arguments.

Before continuing, we observe that, when $v \equiv 1,3 \bmod 6$, the values  $\beta(\lfloor \frac{v}{3} \rfloor,v)$ 
follow easily from known results concerning STS$(v)$.

\begin{thm} 
\label{max.thm}
For all $v \equiv 1,3 \bmod 6$, $v \neq 7$, it holds that $\beta(\lfloor \frac{v}{3} \rfloor,v) = v(v-1)/6$.
Also,  $\beta(2,7) = 5$.
\end{thm}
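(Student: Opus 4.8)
The plan is to prove the two assertions separately, using the fact (from Theorem~\ref{packing.thm}) that $D(v) = v(v-1)/6$ exactly when $v \equiv 1,3 \bmod 6$, i.e.\ exactly when an STS$(v)$ exists, together with standard facts about parallel classes in Steiner triple systems.

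First I would dispose of the upper bound $\beta(\lfloor v/3 \rfloor, v) \le v(v-1)/6$. Since any PSTS$(v)$ has at most $D(v)$ blocks, and for $v \equiv 1,3 \bmod 6$ we have $D(v) = v(v-1)/6$, no PSTS$(v)$ of this order can have more than $v(v-1)/6$ blocks; and a PSTS$(v)$ with exactly that many blocks is an STS$(v)$. So the content of the theorem is really about the lower bound: for $v \equiv 1,3 \bmod 6$ with $v \ne 7$, I must exhibit an STS$(v)$ whose maximum PPC has size exactly $\lfloor v/3 \rfloor$, and for $v=7$ I must show every STS$(7)$ has maximum PPC of size $2$ (not $\lfloor 7/3 \rfloor = 2$ either — wait, $\lfloor 7/3\rfloor = 2$, so the point is that $\beta(2,7)$ counts STS$(7)$, which has $7$ blocks, but no PSTS$(7)$ with a size-$2$ maximum PPC can have $7$ blocks, hence the separate smaller value $5$).

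For the main case $v \equiv 1,3 \bmod 6$, $v\ne 7$: when $v \equiv 3 \bmod 6$, $\lfloor v/3\rfloor = v/3$, so I need an STS$(v)$ that actually \emph{has} a parallel class (a \emph{Kirkman-type} or merely resolvable-enough system suffices — in fact any STS$(v)$ with $v \equiv 3 \bmod 6$ has a parallel class is \emph{not} true in general, but systems with parallel classes certainly exist, e.g.\ Kirkman triple systems for $v \equiv 3 \bmod 6$). When $v \equiv 1 \bmod 6$, $\lfloor v/3 \rfloor = (v-1)/3$, and I need an STS$(v)$ containing $(v-1)/3$ disjoint blocks (a \emph{near-parallel class}, leaving one point uncovered); such systems exist for all $v \equiv 1 \bmod 6$ with $v \ge 13$ (for instance, any STS$(v)$ with a subsystem or via known embedding/recursive constructions). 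Since $v(v-1)/6 = D(v)$ is attained by any STS$(v)$, the existence of such a system gives $\beta(\lfloor v/3\rfloor, v) = v(v-1)/6$. I would cite \cite{CR} for the relevant existence results on parallel and near-parallel classes and handle the small orders $v = 9, 13$ (and note $v=3$ trivially) by direct examples.

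For $\beta(2,7) = 5$: the unique STS$(7)$ (the Fano plane) has the property that its maximum PPC has size $2$ (any two disjoint blocks cover $6$ of the $7$ points, and one checks no $3$ blocks are pairwise disjoint since $3$ disjoint blocks would need $9 > 7$ points — so actually \emph{every} STS$(7)$ has maximum PPC size $2$, and there are $7$ blocks). Hence $\beta(2,7) \ge 7$?? No: I must recheck — the claim is $\beta(2,7) = 5$, so the Fano plane must be \emph{excluded}, meaning its maximum PPC is \emph{not} $2$. Indeed $\lfloor 7/3\rfloor = 2$ but a maximum PPC of size $2$ in an STS means there is no PPC of size $3$; in the Fano plane there genuinely is no set of $3$ pairwise disjoint blocks, so its max PPC is $2$ and it has $7$ blocks, forcing $\beta(2,7)\ge 7$ — this contradicts the theorem, so I have the definition backwards. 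Re-reading: $\beta(\rho,v)$ is the max number of blocks over PSTS$(v)$ \emph{whose max PPC has size exactly} $\rho$; the theorem says this equals $v(v-1)/6$ for $v\equiv 1,3$, $v\ne 7$, and equals $5 < 7$ for $v=7$. So for $v=7$ the Fano plane (7 blocks, max PPC $=2$) would give $\beta(2,7)=7$, contradicting the stated $5$ — therefore the correct reading must be that in the Fano plane the maximum PPC has size \emph{not} $2$; but $\lfloor 7/3\rfloor=2$... The resolution: I think the intended reading excludes the case where the PSTS is a full STS, OR $\lfloor 7/3 \rfloor$ is meant with a different convention. In the proof I would carefully pin down the definition as given, observe that for $v=7$ no PSTS$(7)$ with $6$ or $7$ blocks can have maximum PPC of size exactly $2$ (a $7$-block PSTS$(7)$ is the Fano plane, and I would show its max PPC is $2$, which if true makes $\beta(2,7)\ge 7$) — the main obstacle is reconciling this, so \textbf{the hard part is the $v=7$ case}, where I would enumerate PSTS$(7)$ with $6,7$ blocks, determine their maximum PPC sizes exactly, and conclude that $5$ blocks is the most achievable with a size-$2$ maximum PPC, presumably because both the Fano plane and every $6$-block PSTS$(7)$ admit a PPC of size $3$ — hence I would verify directly that some $6$-block PSTS$(7)$ (a near-pencil or similar) does contain $3$ disjoint blocks while a suitable $5$-block one does not, pinning down $\beta(2,7)=5$.
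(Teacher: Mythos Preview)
Your plan for $v\equiv 1,3 \bmod 6$, $v\neq 7$, is exactly the paper's: since $D(v)=v(v-1)/6$, one only needs an STS$(v)$ possessing a parallel class (for $v\equiv 3$) or a PPC of size $(v-1)/3$ (for $v\equiv 1$, $v\geq 13$), and you cite the right existence results. That part is fine.

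The $v=7$ analysis, however, rests on a factual error that generates all of your subsequent confusion. The Fano plane does \emph{not} have two disjoint blocks: it is the projective plane of order $2$, so any two lines meet in a point. Hence its maximum PPC has size $1$, not $2$, and it contributes to $\beta(1,7)=7$ (cf.\ Theorem~\ref{beta1.thm}), not to $\beta(2,7)$. There is no contradiction with the statement, and no need to reinterpret the definition.

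Once that is fixed, the $v=7$ case is short. Suppose a PSTS$(7)$ contains two disjoint blocks, say $123$ and $456$. Any further block entirely inside $\{1,\dots,6\}$ would be a triangle in the graph of uncovered pairs, but those uncovered pairs form $K_{3,3}$ between $\{1,2,3\}$ and $\{4,5,6\}$, which is triangle-free. Hence every additional block passes through $7$, and $7$ has degree at most $3$, giving at most $5$ blocks in total. The example $123,\,456,\,147,\,257,\,367$ shows $5$ is attained with maximum PPC equal to $2$, so $\beta(2,7)=5$. (In particular, your suggestion that some $6$-block PSTS$(7)$ ``contains $3$ disjoint blocks'' cannot work: three disjoint triples need $9$ points.) The paper's proof records exactly this: the $5$-block example, plus the assertion that no $6$-block PSTS$(7)$ has maximum PPC of size $2$.
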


\begin{proof}
For all $v \equiv 3 \bmod 6$, it is known that there exists an STS$(v)$ containing a parallel class.
For all $v \equiv 1 \bmod 6$, $v\neq 7$, there exists an STS$(v)$ containing a PPC of size $(v-1)/3$.
For $v = 7$, the PSTS$(7)$ consisting of blocks $123,456,147,257,367$ has a maximum PPC of size $2$, and there
is no PSTS$(7)$ having six blocks that has a maximum PPC of size $2$.
\end{proof}

As mentioned above, there are STS$(v)$ in which the maximum partial parallel class has size less than 
$\lfloor v/3\rfloor  - 1$. In this situation, we would have $\beta(\pc,v) = v(v-1)/6$, where 
$\pc$ is the size of the maximum PPC in the given STS$(v)$. For example, there is an STS$(15)$ having a maximum PPC of size $4$, as well as  STS$(21)$ having a maximum PPC of size $6$. Therefore, $\beta(4,15) = 35$ and $\beta(6,21) = 70$.

We summarize the main contributions of this paper.
In Section \ref{construction.sec}, we present constructions for partial Steiner triple systems that have maximum PPCs of a prespecified size, thus obtaining lower bounds on $\beta(\pc,v)$. In Section \ref{bounds.sec}, we  prove an upper bound on 
$\beta(\pc,v)$ using a counting argument. In Section \ref{analysis.sec}, we show that
$\beta(\pc,v) \in \Theta (v)$ if $\pc$ is a constant, and $\beta(\pc,v) \in \Theta (v^2)$ if $\pc = v/c$, where $c$ 
is a constant. When $\pc$ is a constant, our upper and lower bounds on $\beta(\pc,v)$ differ by a constant that 
depends on $\pc$.
In Section \ref{sequencing.sec}, we apply our results to the problem of finding sequenceable PSTS$(v)$. In particular, the constructions we describe in Section \ref{construction.sec} provide infinite classes of sequenceable PSTS$(v)$.
Finally, Section \ref{summary.sec} is a brief summary.

\section{PSTS with Small Maximum PPCs}
\label{construction.sec}

We present a construction for partial Steiner triple systems that have maximum PPCs of a prespecified size. This construction utilizes Room squares, which we define now. Suppose that $\ell$ is even and let $T$ be a set of size $\ell$. A 
\emph{Room square of side $\ell-1$ on symbol set $T$} is an $\ell$ by $\ell$ array $R$ that satisfies the following conditions:
\begin{enumerate}
\item every cell of $R$ either is empty or contains an edge from the complete graph $K_{\ell}$ on symbol set $T$,
\item every edge of $K_{\ell}$ occurs in exactly one cell of $R$,
\item the filled cells in every row of $R$ comprise a one-factor of $K_{\ell}$, and 
\item the filled cells in every column of $R$ comprise a one-factor of $K_{\ell}$.
\end{enumerate}
It is well-known that a Room square of side $\ell-1$ exists if and only if $\ell \geq 2$ 
is even and $\ell \neq 4$ or $6$; see \cite{MW}.

\begin{thm}
\label{construction1}
Let $\ell \geq 2\pc$ be even and assume $(\ell,\pc) \neq (4,2)$. 
Then there exists a PSTS$(\pc + \ell)$ having $\pc \ell/2$ blocks, in which the 
largest partial parallel class has size $\pc$. 
\end{thm}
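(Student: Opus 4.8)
The plan is to build the system on the point set $Y\cup T$, where $T$ is the symbol set of a Room square $R$ of side $\ell-1$ and $Y=\{y_1,\dots,y_\pc\}$ is a set of $\pc$ new points disjoint from $T$, so $|Y\cup T|=\pc+\ell$. I will arrange that every block meets $Y$ in exactly one point: choosing $\pc$ rows of $R$, say the ones whose filled cells give one-factors $F_1,\dots,F_\pc$ of $K_\ell$, I take
\[
\B=\{\,\{y_i\}\cup e : 1\le i\le \pc,\ e\in F_i\,\}.
\]
Then $|\B|=\sum_{i=1}^{\pc}|F_i|=\pc\ell/2$, as wanted. Checking that $(Y\cup T,\B)$ is a PSTS is routine: a pair inside $Y$ lies in no block; a pair $\{y_i,t\}$ with $t\in T$ lies in exactly the one block through $y_i$ given by the unique edge of $F_i$ meeting $t$; and a pair $\{t,t'\}\subseteq T$ lies in a block only if the edge $tt'$ belongs to some $F_i$, which happens for at most one $i$ since distinct rows of a Room square are edge-disjoint.

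The substance is choosing the rows so that the largest PPC has size exactly $\pc$. Since each block has exactly one point of $Y$, the blocks of any PPC have distinct $Y$-points, so no PPC exceeds $\pc$ blocks; the task is to pick the $F_i$ so that they have a system of pairwise disjoint representatives $g_i\in F_i$, which yields the PPC $\{\{y_i\}\cup g_i:1\le i\le\pc\}$ of size $\pc$. Here I would exploit the orthogonality encoded in a Room square through the following observation: the $\ell/2$ filled cells of any single column of $R$ form a one-factor of $K_\ell$, and they lie in $\ell/2$ \emph{distinct} rows, because no cell holds two edges. Since $\ell\ge 2\pc$, I fix a column of $R$, let $g_1,\dots,g_{\ell/2}$ be the edges in it, keep the first $\pc$, and let $F_i$ be the one-factor of the row containing $g_i$. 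Then the $F_i$ are distinct rows, $g_i\in F_i$, and $g_1,\dots,g_\pc$ are pairwise disjoint, so the construction above gives a PSTS$(\pc+\ell)$ with $\pc\ell/2$ blocks whose largest PPC has size exactly $\pc$.

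Two loose ends remain. A Room square of side $\ell-1$ fails to exist exactly when $\ell\in\{4,6\}$; there $\ell\ge 2\pc$ forces $\pc\le 2$ (if $\ell=4$) or $\pc\le3$ (if $\ell=6$), and I would just exhibit by hand $\pc$ pairwise edge-disjoint one-factors of $K_\ell$ with a system of disjoint representatives and rerun the argument — e.g.\ for $\ell=6$ take $F_1=\{12,34,56\}$, $F_2=\{13,25,46\}$, $F_3=\{14,26,35\}$ with representatives $12,46,35$. The excluded pair $(\ell,\pc)=(4,2)$ is genuinely exceptional: $K_4$ has only the three one-factors $\{12,34\}$, $\{13,24\}$, $\{14,23\}$, and no two of them admit a pair of disjoint representatives, so two-row constructions on $4+2$ points cannot force a PPC of size $2$.

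The main obstacle is the PPC lower bound, and in particular the realisation that orthogonality is needed: $\pc$ arbitrary pairwise edge-disjoint one-factors of $K_\ell$ with $\ell\ge 2\pc$ need not have a system of disjoint representatives. For instance, take the three one-factors of $K_8$ obtained by pairing a one-factorization of $K_4$ on $\{1,2,3,4\}$ with one of $K_4$ on $\{5,6,7,8\}$; they are pairwise edge-disjoint, but have no crossing edge, so three pairwise disjoint representatives would be three disjoint edges forcing two disjoint edges inside one of the $4$-subsets — but the only such pair is a perfect matching of that $K_4$, i.e.\ the restriction of a single one-factor, a contradiction. Using a column of a Room square is exactly what rules out such configurations, and once the one-factors are chosen the remaining verifications are mechanical.
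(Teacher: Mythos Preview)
Your proof is correct and follows essentially the same route as the paper: build the blocks by adjoining a new point $y_i$ to every edge of a one-factor $F_i$, and use a column of a Room square of side $\ell-1$ to select $\pc$ rows whose column entries furnish the pairwise-disjoint representative edges; then handle $\ell\in\{4,6\}$ by hand and note that $(\ell,\pc)=(4,2)$ genuinely fails. Your write-up is in fact a bit more careful than the paper's (you verify the PSTS axioms explicitly and spell out that a column has $\ell/2\ge\pc$ filled cells in distinct rows), and your closing counterexample with three edge-disjoint one-factors of $K_8$ lacking a system of disjoint representatives is a nice addition not present in the paper, explaining \emph{why} mere edge-disjointness is insufficient and the Room-square orthogonality is doing real work.
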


\begin{proof}
Let $S = \{s_i : 1 \leq i \leq \pc\}$ and let 
$T$ be a set of size $\ell$ that is disjoint from $S$.
Let $F_1, \dots , F_\pc$ be $\pc$ edge-disjoint one-factors of the complete graph $K_{\ell}$ 
on vertex set $T$. For $1 \leq j \leq \pc$, suppose $e_j \in F_j$ is chosen such that the $\pc$ edges $e_1, \dots ,e_\pc$ are independent. 

We can easily do this  if $\ell \geq 8$ is even: start with a Room square of side $\ell-1$ on symbol set $T$. 
The $\pc$ rows that contain a pair in the first column are the one-factors $F_1, \dots , F_{\pc}$. The edges $e_1, \dots ,e_\pc$
are the pairs in the first column of these rows.

We consider the cases $\ell = 2,4$ or $6$ separately. If $\pc=1$, there is nothing to prove, so we can assume
$2 \leq \pc \leq \ell/2$. 
Suppose $\ell = 4$ and $\pc=2$. It is easily seen that there do not exist two disjoint one-factors of $K_4$ along with two independent edges, one from each one-factor. Next, suppose $\ell = 6$ and $\pc=3$. Here we can use the three one-factors
$\{ 03, 14, 25\}$, $\{01,23,45\}$ and $\{12,34,50\}$ along with the three independent edges $03, 45, 12$. Finally, for 
$\ell = 6$ and $\pc=2$, we use  any two of these three one-factors along with the corresponding independent edges.

Now, adjoin $s_j$ to every edge in $F_j$, for $1 \leq j \leq \pc$. The result is a PSTS$(\pc+ \ell)$ containing $\pc \ell/2$ blocks.
It is easy to see that the largest partial parallel class has size $\pc$. First, the $\pc$ blocks containing the edges $e_1, \dots ,e_\pc$ are disjoint, so we have a partial parallel class of size $\pc$. Second, every block contains a point from $S$, so there does not exist a partial parallel class of size $\pc+1$.
\end{proof}

Here is a slight improvement of Theorem \ref{construction1}.

\begin{thm}
\label{construction2}
Let $\ell \geq  2\pc$ be even and assume $(\ell,\pc) \neq (4,2)$. 
Then there exists a PSTS$(\pc + \ell)$ having $\frac{\pc \ell}{2} + D(\pc)$ blocks, in which the 
largest partial parallel class has size $\pc$.
\end{thm}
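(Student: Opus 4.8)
The plan is to take the PSTS$(\pc+\ell)$ constructed in the proof of Theorem~\ref{construction1} and simply adjoin to it a maximum packing on the ``extra'' point set $S$. If $\pc \leq 2$, no block can be contained in a set of $\pc$ points, so $D(\pc) = 0$ and the assertion is exactly Theorem~\ref{construction1}; hence I may assume $\pc \geq 3$.

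Concretely, I would let $(S \cup T, \B_0)$ be the PSTS$(\pc+\ell)$ from Theorem~\ref{construction1}, where $|S| = \pc$, $|T| = \ell$, and every block of $\B_0$ has the form $\{s_j\} \cup e$ with $e$ an edge of $K_\ell$ on vertex set $T$ and $1 \leq j \leq \pc$. The crucial feature of this system is that each block meets $S$ in exactly one point, so $\B_0$ covers no pair lying inside $S$. By the definition of $D(\pc)$, there is a PSTS$(\pc)$ on point set $S$, say $(S, \B_1)$, with $|\B_1| = D(\pc)$; every block of $\B_1$ is a $3$-subset of $S$. I would then take $\B = \B_0 \cup \B_1$ as the block set of a PSTS on $S \cup T$.

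It remains to verify three things. First, $(S \cup T, \B)$ is a genuine PSTS: a pair contained in $S$ is covered only by blocks of $\B_1$ (and those at most once, since $\B_1$ is a PSTS), while every other pair is covered at most once by $\B_0$ alone, so no pair is repeated. Second, $|\B| = |\B_0| + |\B_1| = \pc\ell/2 + D(\pc)$, as the two block sets are disjoint. Third, the largest partial parallel class has size exactly $\pc$: it is at least $\pc$ because the $\pc$ pairwise disjoint blocks $\{s_j\}\cup e_j$ from the proof of Theorem~\ref{construction1} still belong to $\B$; and it is at most $\pc$ because, in any set of pairwise disjoint blocks comprising $m$ blocks of $\B_0$ and $k$ blocks of $\B_1$, the $\B_0$-blocks use $m$ distinct points of $S$ and the $\B_1$-blocks use $3k$ further distinct points of $S$, forcing $m + 3k \leq |S| = \pc$ and hence $m + k \leq \pc$.

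I do not expect a real obstacle here: the argument is just a bookkeeping combination of Theorem~\ref{construction1} with a maximum packing on $\pc$ points. The only points deserving attention are the observation that blocks of $\B_0$ never cover a pair inside $S$ (which is what makes the union a partial system), the short counting argument that caps the size of a partial parallel class, and disposing of the degenerate cases $\pc \in \{1,2\}$.
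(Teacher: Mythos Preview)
Your proposal is correct and follows essentially the same approach as the paper: start from the system of Theorem~\ref{construction1} and adjoin a maximum PSTS on the point set $S$, then observe that every block meets $S$ so no partial parallel class can exceed $\pc$. Your write-up is in fact more detailed than the paper's, which simply notes that ``every block still meets $S$ in at least one point'' without spelling out the PSTS verification or the $m+3k\le\pc$ count.
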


\begin{proof}
Use Theorem \ref{construction1} to construct a PSTS$(\pc + \ell)$ having $\frac{\pc \ell}{2}$ blocks, such that the 
largest partial parallel class has size $\pc$.  We add $D(p)$ additional blocks that comprise a 
maximum PSTS$(\pc)$ on the points in the set $S$. The new PSTS cannot have a partial parallel class of size exceeding $\pc$ because every block still meets $S$ in at least one point.
\end{proof}

\begin{ex} We illustrate Theorem \ref{construction2} with $\pc = 3$ and $\ell= 8$. We start with a Room square of side $7$:
\[
\begin{array}{|c|c|c|c|c|c|c|c}
\cline{1-7}
\textcolor{red}{70} &    &    & 64 &    & 32 & 51 & \quad  F_1 \\  \cline{1-7}
\textcolor{red}{26} & 71 &    &    & 05 &    & 43 & \quad  F_2 \\  \cline{1-7}
\textcolor{red}{54} & 30 & 72 &    &    & 16 &    & \quad  F_3 \\  \cline{1-7} 
   & 65 & 41 & 73 &    &    & 20 &     \\  \cline{1-7} 
31 &    & 06 & 52 & 74 &    &    &  \\  \cline{1-7} 
   & 42 &    & 10 & 63 & 75 &    &     \\  \cline{1-7} 
   &    & 53 &    & 21 & 04 & 76 &     \\  \cline{1-7}  
\end{array}
\]
The one-factors $F_1, F_2 , F_3$ are indicated, and the edges $e_1, e_2 , e_3$ appear in red.
From these three one-factors, we obtain $12$ blocks of size three on the points
in $S \cup T$, where  $S = \{s_1, s_2,s_3\}$ and $T = \{0, \dots , 7\}$. We can adjoin 
$D(3) = 1$ additional block on $S$, obtaining a PSTS$(11)$ having  $b = 13$ blocks:
\[ 
\begin{array}{llll}
\textcolor{red}{\{s_1,7,0\}} & \{s_1,6,4\} & \{s_1,3,2\} & \{s_1,5,1\} \\ 
\textcolor{red}{\{s_2,2,6\}} & \{s_2,7,1\} & \{s_2,0,5\} & \{s_2,4,3\} \\ 
\textcolor{red}{\{s_3,5,4\}} & \{s_3,3,0\} & \{s_3,7,2\} & \{s_3,1,6\} \\
\{s_1,s_2,s_3\} 
\end{array}
\]
This PSTS$(11)$ has a maximum parallel class of size three, which is indicated in red.
$\blacksquare$
\end{ex}

Theorems \ref{construction1} and
\ref{construction2}  can only be applied when $v - \pc$ is even. We present a simple variation that accommodates 
odd values of $v -\pc$.

\begin{thm}
\label{construction3}
Let $\ell >  2\pc$ be even. 
Then there exists a PSTS$(\pc + \ell - 1)$ having $\frac{\pc \ell}{2} + D(\pc) - \pc$ blocks, in which the 
largest partial parallel class has size $\pc$.
\end{thm}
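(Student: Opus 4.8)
The plan is to reduce Theorem~\ref{construction3} to Theorem~\ref{construction2} by ``deleting a point.'' Starting from a PSTS$(\pc + \ell)$ as produced by Theorem~\ref{construction2} (which applies since $\ell \geq 2\pc$ and, because $\ell > 2\pc$ is even, the excluded case $(\ell,\pc) = (4,2)$ cannot occur here), we remove one carefully chosen point together with all blocks through it. The resulting structure is a PSTS on $\pc + \ell - 1$ points; the task is to choose the deleted point so that (i) the number of blocks removed is exactly $\pc$, and (ii) the size of the maximum PPC is unchanged, i.e.\ still equal to $\pc$.

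First I would recall the internal structure of the PSTS from Theorem~\ref{construction2}: its point set is $S \cup T$ with $|S| = \pc$, $|T| = \ell$, every block meets $S$, and the $\pc$ blocks containing the independent edges $e_1,\dots,e_\pc$ (one for each $s_j$) form a PPC of size $\pc$. The natural candidate to delete is a point $t \in T$ that is \emph{not} covered by any of the edges $e_1,\dots,e_\pc$. Such a point exists because $e_1,\dots,e_\pc$ are independent and hence cover only $2\pc < \ell$ vertices of $T$. The degree of any point $t \in T$ in this PSTS is exactly $\pc$ (it lies in one block for each one-factor $F_j$, namely the block $\{s_j\} \cup f$ where $f$ is the unique edge of $F_j$ through $t$), so deleting $t$ removes exactly $\pc$ blocks, giving $\frac{\pc\ell}{2} + D(\pc) - \pc$ blocks as claimed.

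Next I would verify the maximum-PPC size is preserved. The lower bound is immediate: since $t$ is uncovered by $e_1,\dots,e_\pc$, all $\pc$ blocks $\{s_j\}\cup e_j$ survive the deletion, so a PPC of size $\pc$ still exists. For the upper bound, every surviving block still contains a point of $S$ (deletion of blocks cannot create new blocks), and $|S| = \pc$, so no PPC of size $\pc+1$ is possible. Hence the maximum PPC has size exactly $\pc$, and the point count $(\pc + \ell) - 1 = \pc + \ell - 1$ is as stated.

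The only genuine obstacle is the bookkeeping around small or degenerate cases, which the hypothesis $\ell > 2\pc$ is designed to neutralize: it guarantees $\ell \geq 2\pc + 2$, so $T$ has at least two points outside $e_1,\dots,e_\pc$, it rules out $(\ell,\pc)=(4,2)$, and it ensures $\ell \geq 4$ (indeed $\ell \geq 6$ when $\pc \geq 2$) so Theorem~\ref{construction2} genuinely applies. One should also note the trivial case $\pc = 1$ separately if desired, though the argument goes through uniformly. I would present the proof as: invoke Theorem~\ref{construction2}; observe the degree-$\pc$ property of points in $T$ and pick $t\in T$ missed by all $e_j$; delete $t$; count blocks; check both directions of the PPC bound.
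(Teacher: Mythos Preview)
Your proposal is correct and follows essentially the same route as the paper: invoke Theorem~\ref{construction2}, pick a point of $T$ missed by all the $e_j$ (which exists since $2\pc < \ell$), and delete it together with the $\pc$ blocks through it. You supply more detail than the paper does---in particular the explicit degree-$\pc$ computation for points of $T$ and the two-sided check on the PPC size---but the argument is the same.
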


\begin{proof}
Use Theorem \ref{construction2} to construct a PSTS$(\pc + \ell)$ having $\frac{\pc \ell}{2} + D(\pc)$ blocks, such that the 
largest partial parallel class has size $\pc$. Since $2\pc < \ell$, there exists a point $x \in T \setminus ( \bigcup _{i=1}^{\pc} e_i )$. Delete the $\pc$ blocks that contain $x$ and note that none of these blocks are contained in the PPC of size $\pc$.
\end{proof} 

Combining Theorems \ref{construction2} and \ref{construction3}, we have the following lower bounds on $\beta(\pc,v)$.

\begin{thm}
\label{const.thm}
\quad 
\begin{enumerate}
\item If $v \geq  3\pc$, $v -\pc$ is even and  $(v,\pc) \neq (6,2)$, then 
\[ \beta(\pc,v) \geq \frac{\pc (v-\pc)}{2} + D(\pc).\]
\item If $v >  3\pc$ and $v -\pc$ is odd, then  
\[\beta(\pc,v) \geq \frac{\pc (v - \pc-1)}{2} + D(\pc).\]
\end{enumerate}
\end{thm}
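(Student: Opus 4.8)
The plan is that this theorem follows by directly applying Theorems~\ref{construction2} and~\ref{construction3} with the right choice of the parameter $\ell$, after checking that their hypotheses hold in each case. No new construction is needed; the only work is bookkeeping, translating the exclusions and simplifying the block counts.

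For part~(1), I would set $\ell = v - \pc$. Since $v - \pc$ is assumed even, $\ell$ is even, and the condition $v \geq 3\pc$ gives $\ell = v - \pc \geq 2\pc$, so the inequality hypothesis of Theorem~\ref{construction2} holds. The excluded case $(\ell,\pc) = (4,2)$ corresponds exactly to $(v,\pc) = (\ell+\pc,\pc) = (6,2)$, which is excluded by hypothesis. Hence Theorem~\ref{construction2} produces a PSTS$(\pc+\ell) = $ PSTS$(v)$ with $\frac{\pc \ell}{2} + D(\pc) = \frac{\pc (v-\pc)}{2} + D(\pc)$ blocks whose largest partial parallel class has size $\pc$, which gives the stated lower bound on $\beta(\pc,v)$.

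For part~(2), I would set $\ell = v - \pc + 1$. Since $v - \pc$ is odd, $\ell$ is even, and the strict inequality $v > 3\pc$ gives $\ell = v - \pc + 1 > 2\pc$, exactly the hypothesis of Theorem~\ref{construction3}. Applying it yields a PSTS$(\pc + \ell - 1) = $ PSTS$(v)$ with $\frac{\pc \ell}{2} + D(\pc) - \pc$ blocks and maximum partial parallel class of size $\pc$; substituting $\ell = v - \pc + 1$ and simplifying, $\frac{\pc(v-\pc+1)}{2} - \pc + D(\pc) = \frac{\pc(v - \pc - 1)}{2} + D(\pc)$, which is the claimed bound.

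I do not expect a genuine obstacle here — the content is all in Theorems~\ref{construction1}--\ref{construction3}. The one point to verify carefully is that the small even values $\ell \in \{2,4,6\}$ are legitimately covered: Theorem~\ref{construction2} inherits from Theorem~\ref{construction1} an explicit treatment of $\ell = 2,4,6$, so the only gap is $(\ell,\pc) = (4,2)$, which is precisely the excluded pair $(v,\pc) = (6,2)$ in part~(1); and in part~(2) the strict inequality $\ell > 2\pc$ together with $\ell$ even rules out $(\ell,\pc)=(4,2)$ automatically, so no exceptional case arises.
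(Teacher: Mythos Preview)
Your proposal is correct and matches the paper's own approach exactly: the paper simply states that Theorem~\ref{const.thm} follows by combining Theorems~\ref{construction2} and~\ref{construction3}, and your bookkeeping (setting $\ell = v-\pc$ for part~(1) and $\ell = v-\pc+1$ for part~(2), then verifying the hypotheses and the exceptional-case exclusion) spells out precisely what that combination entails.
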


\section{An Upper Bound on $\beta(\pc,v)$}
\label{bounds.sec}

In this section, we prove an upper bound on $\beta(\pc,v)$ using a simple counting argument.
The proof of this bound depends on a very useful observation due to  Lindner and Phelps \cite{LP}. 
Suppose $\STS = (X, \B)$ is a PSTS$(v)$ in which the maximum
maximum partial parallel class has size  $\pc$; hence $v \geq 3\pc$.
Let $\mathcal{P} = \{B_1, \dots , B_\pc\}$ be a set of $\pc$ disjoint blocks 
and let $P = \bigcup_{i=1}^{\pc}  B_i$. 

For $x\in P$, let $T_x$ denote the set of blocks 
that contain $x$ and two points in $X \setminus P$.  Then $T_x$ is a partial one-factor of $X \setminus P$.
Define $t_x = |T_x|$; then $t_x \leq (v-3\pc)/2$.

Suppose $t_x \geq 3$ and $t_y \geq 1$ for some pair $\{x,y\} \in B_i$. Then, as noted in \cite{LP}, 
it is clear that there exists a block $A \in T_x$ such that
$A \cap B = \varnothing$, where $B$ is the unique block in $T_y$. Then 
\[ \mathcal{P} \setminus \{B_i\} \cup \{ A,B\}\] 
is a partial parallel class of size $\pc+1$, which is a contradiction.

Denote $X_0 = \{x \in P: t_x >0 \}$. In view of the above discussion, 
the proof of the following lemma is straightforward.

\begin{lemma}
\cite{LP}
\label{tx.lem}
For any block $B_i  = \{x,y,z\} \in \mathcal{P}$, one of the following two conditions holds:
\begin{enumerate}
\item $|B_i \cap X_0| \geq 2$ and $t_x+t_y+t_z \leq 6$.
\item $|B_i \cap X_0| \leq 1$ and $t_x+t_y+t_z \leq (v-3\pc)/2$.
\end{enumerate}
\end{lemma}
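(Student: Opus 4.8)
The plan is to analyze each block $B_i = \{x,y,z\} \in \mathcal{P}$ according to how many of its points lie in $X_0$, using the exchange argument from Lindner and Phelps that precedes the lemma. The key dichotomy is whether $B_i$ has at least two points with positive $t$-value or at most one.

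First I would handle the case $|B_i \cap X_0| \geq 2$. Without loss of generality suppose $t_x \geq t_y \geq 1$ (so $x,y \in X_0$). The exchange argument shows that we cannot have $t_x \geq 3$ and $t_y \geq 1$ simultaneously, since that would produce a PPC of size $\pc+1$. Hence $t_x \leq 2$. If $t_z \geq 1$ as well, then by the same reasoning applied to the pair $\{x,z\}$ (and to $\{y,z\}$), every pair within $B_i$ with both $t$-values positive must have its larger value at most $2$; since all three are positive, $t_x, t_y, t_z \leq 2$, giving $t_x+t_y+t_z \leq 6$. If instead $t_z = 0$, then only $t_x$ and $t_y$ are positive, both at most $2$, so $t_x+t_y+t_z \leq 4 \leq 6$. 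Either way condition~1 holds.

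Next I would handle the case $|B_i \cap X_0| \leq 1$, say $t_y = t_z = 0$ and only $t_x$ possibly positive. Then $t_x + t_y + t_z = t_x \leq (v-3\pc)/2$, where the bound on $t_x$ is exactly the one already recorded (each $T_x$ is a partial one-factor of the $(v-3\pc)$-set $X \setminus P$, so $t_x \leq \lfloor (v-3\pc)/2\rfloor$). This is condition~2. Since the two cases are exhaustive, the lemma follows.

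I do not expect a genuine obstacle here — the content is entirely in the Lindner--Phelps exchange observation, which is granted in the excerpt, plus the trivial degree bound $t_x \leq (v-3\pc)/2$. The only point requiring a little care is making the first case fully symmetric: one must check that \emph{no} pair inside $B_i$ can have a $t$-value $\geq 3$ paired with a positive $t$-value, and then argue that if two or three of the $t$-values are positive they are each $\leq 2$. This is a short finite check rather than a real difficulty.
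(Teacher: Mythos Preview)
Your proposal is correct and is exactly the argument the paper has in mind: the paper itself gives no detailed proof, merely noting that the lemma is ``straightforward'' from the Lindner--Phelps exchange observation and the trivial bound $t_x \leq (v-3\pc)/2$, and you have spelled out precisely that case analysis. The only cosmetic point is that in your first case you could phrase the symmetry more directly---once any two of $t_x,t_y,t_z$ are positive, the exchange argument forbids any of the three from reaching $3$---but the content is identical.
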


  \begin{thm}
 \label{rho.thm}
 Suppose $v\geq 3\pc$ and $\STS = (X,\B)$ is a  PSTS$(v)$ having a maximum partial parallel class of size $\pc$. 
Then the number of blocks, $b$, satisfies the following inequality:
\begin{equation}
\label{upperbound.eq}
b \leq \pc \left( \frac{9\pc  - 7}{2} + \max \left\{6, \left\lfloor \frac{v-3\pc}{2} \right\rfloor \right\}
 \right).
 \end{equation}
 \end{thm}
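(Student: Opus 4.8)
The plan is to bound $b$ by classifying every block of $\STS$ according to how it meets the point set $P = \bigcup_{i=1}^{\pc} B_i$ of a fixed maximum PPC $\mathcal{P} = \{B_1, \dots, B_\pc\}$, and then summing the contributions. Since $\mathcal{P}$ is maximum, no block of $\B$ can be disjoint from $P$ (otherwise it could be adjoined after deleting nothing, giving a PPC of size $\pc+1$), so every block meets $P$ in at least one point. I would split $\B$ into three types: (i) blocks with all three points in $P$; (ii) blocks with exactly two points in $P$ and one point in $X \setminus P$; (iii) blocks with exactly one point in $P$ and two points in $X \setminus P$ — the last type being exactly the sets $T_x$ counted in Lemma~\ref{tx.lem}.

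Next I would count each type. Type (i): these form a PSTS on the $3\pc$ points of $P$, so there are at most $D(3\pc) \le \binom{3\pc}{2}/3 = \pc(3\pc-1)/2$ of them; more simply, each point of $P$ lies in at most $(3\pc-1)/2$ such blocks, but the cleanest route is probably to bound types (i) and (ii) together by a degree count. Type (ii): a block of type (ii) uses a pair from $P$, and each pair of points of $P$ occurs in at most one block of $\B$, so the number of type-(i) plus type-(ii) blocks containing a fixed pair is controlled; alternatively, fix $x \in P$ and count blocks through $x$ that stay within $P$ or use one outside point — the total degree $d_x \le (v-1)/2$ gives a bound but that is too weak. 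The sharper idea, which must be where the constant $9\pc-7$ comes from, is: for each $x \in P$, the blocks through $x$ are partitioned into those contributing to types (i)/(ii) (call there $d_x - t_x$ of them) and the $t_x$ blocks of type (iii). Summing $t_x$ over $x \in P$ and invoking Lemma~\ref{tx.lem} gives $\sum_{x \in P} t_x \le \pc \cdot \max\{6, \lfloor (v-3\pc)/2\rfloor\}$, since each of the $\pc$ blocks $B_i$ contributes at most that max to the sum. Each type-(iii) block is counted once this way (it has exactly one point in $P$), so the number of type-(iii) blocks is at most $\pc \max\{6, \lfloor (v-3\pc)/2\rfloor\}$.

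For types (i) and (ii), I would count pairs $(x, B)$ with $x \in P$ and $B$ a type-(i) or type-(ii) block containing $x$: a type-(i) block is counted three times, a type-(ii) block twice, so if $n_1, n_2$ are their numbers then $3n_1 + 2n_2 = \sum_{x \in P}(d_x - t_x) \le \sum_{x\in P} d_x$. To bound $\sum_{x \in P} d_x$: every block through a point of $P$ that is not type (iii) uses at least one pair inside $P$, and there are only $\binom{3\pc}{2}$ pairs in $P$, each in at most one block, so $n_1 + n_2 \le \binom{3\pc}{2} = \pc(3\pc-1)/2$. Hmm — that already gives $b \le \pc(3\pc-1)/2 + \pc\max\{6,\lfloor(v-3\pc)/2\rfloor\}$, and $\tfrac{3\pc-1}{2} = \tfrac{3\pc-1}{2}$ while the claimed bound has $\tfrac{9\pc-7}{2}$, so my pair-count must be losing too much; the right accounting presumably keeps the full set of blocks meeting $P$ in $\ge 1$ point and uses $\sum_{x \in P} d_x$ more carefully, noting that type-(ii) blocks are double-counted and each such double-count is paid for by a pair inside $P$. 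I would reconcile this by writing $b = n_1 + n_2 + n_3$, using $2b \le 2n_1 + 2n_2 + 2n_3$ together with $3n_1 + 2n_2 = \sum_{x}(d_x - t_x)$ and an upper bound $d_x \le 3\pc - 1 + \text{(something)}$ — actually $d_x$ counts pairs $\{x,y\}$ with $y$ ranging over all of $X$, but blocks through $x$ are at most $\lfloor (v-1)/2 \rfloor$; the gain over this trivial bound is that blocks through $x$ meeting $X \setminus P$ in two points are the $t_x$ ones, and blocks meeting it in $\le 1$ point use a second point from $P \setminus \{x\}$, of which there are $3\pc - 1$. Hence $d_x \le t_x + (3\pc - 1)$, so $d_x - t_x \le 3\pc - 1$, giving $3n_1 + 2n_2 \le \pc(3\pc-1)$. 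Combined with $n_1 \ge 0$ this yields $n_1 + n_2 \le \tfrac{1}{2}(3n_1 + 2n_2) - \tfrac{1}{2}n_1 \le \tfrac{\pc(3\pc-1)}{2}$ — still the weaker constant, so I must instead NOT discard type-(iii) from the $d_x$ sum but bound $2b = \sum_x (\text{counts})$ globally; I would set up $\sum_{x\in P} d_x = 3n_1 + 2n_2 + n_3$ and also $\sum_{x \in P} d_x \le \sum_{x \in P}(t_x + 3\pc - 1) = \sum t_x + \pc(3\pc-1) \le \pc\max\{6,\lfloor(v-3\pc)/2\rfloor\} + \pc(3\pc - 1)$, then combine with $b = n_1 + n_2 + n_3 \le (3n_1 + 2n_2 + n_3) - (2n_1 + n_2) \le \sum_x d_x - (2n_1 + n_2)$, and bound $2n_1 + n_2$ from below using that the $n_1 + n_2$ blocks of types (i),(ii) occupy distinct pairs inside $P$.

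\smallskip

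The main obstacle, then, is the bookkeeping to extract the exact coefficient $\tfrac{9\pc - 7}{2}$: one must combine the three counting identities ($\sum_{x \in P} d_x = 3n_1 + 2n_2 + n_3$; the per-point bound $d_x \le t_x + 3\pc - 1$; and the per-block bound from Lemma~\ref{tx.lem} on $\sum_{x \in P} t_x$) in exactly the right linear combination, and it appears one should apply Lemma~\ref{tx.lem} directly block-by-block rather than passing through $n_3$ alone — i.e., for $B_i$ in case~1 of the lemma the three points of $B_i$ contribute at most $6$ to $\sum t_x$ but may contribute a lot to $n_1 + n_2$, whereas in case~2 the reverse holds, and balancing these two regimes against the universal bound $d_x \le \lfloor(v-1)/2\rfloor$ is what produces the clean final inequality. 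So the proof I would write is: fix the maximum PPC, define $n_1, n_2, n_3, t_x, X_0$; observe $b = n_1 + n_2 + n_3$ and every block meets $P$; write $\sum_{x\in P} d_x = 3n_1 + 2n_2 + n_3$; bound $n_1 \le D(3\pc)$ crudely if needed but more likely bound $3n_1 + 2n_2$ via the pair budget $\binom{3\pc}{2}$ of $P$ (each pair in $\le 1$ block, type-(i) blocks swallowing $3$ pairs, type-(ii) one pair: $3n_1 + n_2 \le \binom{3\pc}{2}$); then $b = n_1 + n_2 + n_3 \le \tfrac13(3n_1 + n_2) + \tfrac23 n_2 + n_3$, hmm, and finally invoke Lemma~\ref{tx.lem} summed over $i$ to bound $n_3 \le \sum_{i}\bigl(\text{that } B_i\text{'s contribution}\bigr) \le \pc\max\{6,\lfloor(v-3\pc)/2\rfloor\}$ and $n_2$ via the residual pairs. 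Getting all the fractions to land on $\pc\bigl(\tfrac{9\pc-7}{2} + \max\{6,\lfloor(v-3\pc)/2\rfloor\}\bigr)$ is the delicate part; everything else is routine.
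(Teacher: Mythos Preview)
Your approach is the paper's approach, and you already wrote down every ingredient needed --- the only thing blocking you is an arithmetic slip. You computed $\binom{3\pc}{2} = \pc(3\pc-1)/2$; the correct value is $\binom{3\pc}{2} = \dfrac{3\pc(3\pc-1)}{2} = \dfrac{9\pc^2-3\pc}{2} = \pc\cdot\dfrac{9\pc-3}{2}$. So your crude bound $n_1+n_2 \le \binom{3\pc}{2}$ already lands at $\pc(9\pc-3)/2$, only $2\pc$ away from the target $\pc(9\pc-7)/2$, not a factor of three off as you feared.

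The missing $2\pc$ comes immediately from another observation you made yourself: type-(i) blocks consume three pairs in $P$ while type-(ii) blocks consume one, so in fact $3n_1 + n_2 \le \binom{3\pc}{2}$. Since the $\pc$ blocks of $\mathcal{P}$ are all of type (i), we have $n_1 \ge \pc$, whence
\[
n_1 + n_2 \;=\; (3n_1 + n_2) - 2n_1 \;\le\; \binom{3\pc}{2} - 2\pc \;=\; \pc\cdot\frac{9\pc-7}{2}.
\]
Adding your (correct) bound $n_3 = \sum_{x\in P} t_x \le \pc\max\{6,\lfloor(v-3\pc)/2\rfloor\}$ from Lemma~\ref{tx.lem} gives the theorem. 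The paper phrases this identically: the $\pc$ blocks of $\mathcal{P}$ cover $3\pc$ pairs in $P$, so at most $\binom{3\pc}{2}-3\pc$ further blocks can contain two or more points of $P$, for a total of $\pc + \bigl(\binom{3\pc}{2}-3\pc\bigr) = \binom{3\pc}{2}-2\pc$ such blocks. Everything after your arithmetic error --- the degree sums, the linear combinations, the ``delicate balancing'' --- is unnecessary.
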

 
 \begin{proof}
Let $\mathcal{P} = \{B_1, \dots , B_\pc\}$ be a set of $\pc$ disjoint blocks in $\B$
and let $P = \bigcup_{i=1}^{\pc}  B_i$.  
The $\pc$ blocks in $\mathcal{P}$ each include three points from $P$. These blocks cover 
$3\pc$ pairs of points from $P$. The number of pairs of points from $P$ that do not occur in the blocks in $\mathcal{P}$
is $\binom{3\pc}{2} - 3\pc$.
Therefore, there are at most 
\[ \pc + \binom{3\pc}{2} - 3\pc = \binom{3\pc}{2} - 2\pc\] blocks in $\B$ that contain at least two points in $P$.  
All remaining blocks in $\B$ contain exactly one point in $P$.  It therefore follows immediately from Lemma \ref{tx.lem} that 
the number of blocks in $\B$ is at most
 \begin{align*}
&\binom{3\pc}{2} - 2\pc + \pc \times \max \left\{6, \left\lfloor \frac{v-3\pc}{2} \right\rfloor \right\}
 \\
 = &\pc \left( \frac{9\pc  - 7}{2} + \max \left\{6, \left\lfloor \frac{v-3\pc}{2} \right\rfloor \right\}
 \right) .
\end{align*}
\end{proof}

We have the following corollary of Theorem \ref{rho.thm}.

  \begin{cor}
 \label{rho.cor}
 Suppose $v \geq 3 \pc + 12$. Then
 \begin{equation}
\label{simpleupperbound.eq}
\beta(\pc,v) \leq \pc \left( \frac{6\pc +v - 7}{2}  \right) .
\end{equation}
\end{cor}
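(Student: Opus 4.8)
The plan is to derive this directly from Theorem \ref{rho.thm}, which already bounds the number of blocks $b$ in any PSTS$(v)$ with a maximum PPC of size $\pc$. Since $\beta(\pc,v)$ is by definition the largest such $b$, inequality \eqref{upperbound.eq} applies verbatim to $\beta(\pc,v)$, and it only remains to simplify the right-hand side under the hypothesis $v \geq 3\pc + 12$.

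First I would observe that $v \geq 3\pc + 12$ is equivalent to $v - 3\pc \geq 12$, hence $\lfloor (v-3\pc)/2 \rfloor \geq 6$. Consequently the maximum appearing in \eqref{upperbound.eq} is attained by its second argument:
\[
\max \left\{6, \left\lfloor \frac{v-3\pc}{2} \right\rfloor \right\} = \left\lfloor \frac{v-3\pc}{2} \right\rfloor \leq \frac{v - 3\pc}{2}.
\]
Substituting this into \eqref{upperbound.eq} and combining the two fractions gives
\[
\beta(\pc,v) \leq \pc \left( \frac{9\pc - 7}{2} + \frac{v - 3\pc}{2} \right) = \pc \left( \frac{6\pc + v - 7}{2} \right),
\]
which is precisely \eqref{simpleupperbound.eq}.

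There is no substantive obstacle here; the only thing to be careful about is the direction of the rounding. Dropping the floor can only increase the bound, so replacing $\lfloor (v-3\pc)/2 \rfloor$ by $(v-3\pc)/2$ is safe and keeps the inequality valid (at the cost of a slightly weaker, but cleaner, closed-form bound). I would state the corollary's proof in essentially these three lines.
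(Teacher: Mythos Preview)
Your proof is correct and follows essentially the same argument as the paper: apply Theorem~\ref{rho.thm}, use $v \geq 3\pc + 12$ to see that the maximum equals $\lfloor (v-3\pc)/2 \rfloor \leq (v-3\pc)/2$, and simplify. The paper's proof is identical in structure and brevity.
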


\begin{proof}  
For $v \geq 3 \pc + 12$, we have
\[ \max \left\{6, \left\lfloor \frac{v-3\pc}{2} \right\rfloor \right\} =  \left\lfloor \frac{v-3\pc}{2} \right\rfloor 
\leq  \frac{v-3\pc}{2}.\]
It follows from  Theorem \ref{rho.thm} that 
\[ \beta(\pc,v) \leq
\pc \left( \frac{9\pc  - 7}{2} + \frac{v-3\pc}{2} \right) =  \pc \left( \frac{6\pc +v - 7}{2}  \right)
.
 \]
\end{proof} 
 
Define
\[f(\pc) = \pc \left( \frac{6\pc +v - 7}{2}  \right) .\]
For fixed $v$, $f(\pc)$ is an increasing function of $\pc$. 
Now, suppose we take $\pc =\frac{v+3}{6}$.  We compute
\begin{align*}
f\left(\frac{v+3}{6}\right) &= \left(\frac{v+3}{6}\right) \left( \frac{v+3+v-7}{2}\right)\\
& = \left(\frac{v+3}{6}\right) \left( v-2 \right)\\
& = \frac{v^2+v-6}{6}\\
& \geq \frac{v^2-v}{6},
\end{align*}
since $v \geq 3$.
Therefore, the bound (\ref{simpleupperbound.eq}) is useful only when  $\pc < (v+3)/6$.
For $\pc \geq (v+3)/6$, we just have the trivial upper bound 
\[\beta(\pc,v) \leq \frac{v(v-1)}{6}.\]

 \section{Analyzing the Bounds}
 \label{analysis.sec}

First, we observe that we can compute the exact value of $\beta(1,v)$ for any $v \geq 3$.

\begin{thm}
\label{beta1.thm}
Suppose $v \geq 3$. Then
\[ 
\beta (1,v) =
\begin{cases}
1 & \text{if $v= 3,4$}\\
2 & \text{if $v= 5$}\\
4 & \text{if $v= 6$}\\
7 & \text{if $7 \leq v \leq 14$}\\
\lfloor \frac{v-1}{2} \rfloor & \text{if $v \geq 15$.}
\end{cases}
\]
\end{thm}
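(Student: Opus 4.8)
The plan is to determine $\beta(1,v)$ exactly by combining three ingredients: the trivial bound $\beta(1,v)\le D(v)$ coming from Theorem \ref{packing.thm}, a structural dichotomy for intersecting partial triple systems, and one explicit construction for each range of~$v$.

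First I would observe that a PSTS$(v)$ has maximum PPC of size exactly $1$ precisely when it has at least one block and no two of its blocks are disjoint, i.e.\ its block set is an \emph{intersecting} family; in particular $\beta(1,v)\le D(v)$. The main step is the following dichotomy: if $\STS=(X,\B)$ is an intersecting PSTS$(v)$, then either some point $x$ lies in every block, in which case deleting $x$ from each block yields pairwise disjoint edges on $X\setminus\{x\}$ (a repeated vertex would place a pair in two blocks), so $|\B|\le\lfloor(v-1)/2\rfloor$; or $|\B|\le 7$. To establish the second alternative I would fix $B_1=\{a,b,c\}\in\B$; if some point of $B_1$ lies in every block we are in the first alternative, so assume not. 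Then, after relabelling, there are blocks $B_2=\{a,d,e\}$ with $B_1\cap B_2=\{a\}$, and $B_3$ with $a\notin B_3$ but $b,d\in B_3$; linearity (no repeated pair) forces $B_3=\{b,d,f\}$ with $f\notin\{a,b,c,d,e\}$. A short case analysis --- splitting on which of $a,b,c$ a further block $B$ meets $B_1$ in, and using linearity against $B_1,B_2,B_3$ at each branch --- then shows that every block of $\B$ other than $B_1,B_2,B_3$ must be one of the four shapes $\{a,f,\ast\}$, $\{b,e,\ast\}$, $\{c,d,\ast\}$, $\{c,e,f\}$; since linearity permits at most one block of each shape, $|\B|\le 3+4=7$. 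Hence $\beta(1,v)\le\min\bigl(D(v),\,\max\{7,\lfloor(v-1)/2\rfloor\}\bigr)$ for every $v\ge 3$.

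It then remains to evaluate this bound in each range and supply a matching PSTS. For $v=3,4$ one has $D(v)=1$, and the single block $\{1,2,3\}$ attains it. For $v=5$, $D(5)=2$, and $\{1,2,3\},\{1,4,5\}$ is an intersecting PSTS$(5)$. For $v=6$, $D(6)=4$, and the blocks $\{1,2,3\},\{1,4,5\},\{2,4,6\},\{3,5,6\}$ are pairwise intersecting with no pair repeated. For $7\le v\le 14$, $\lfloor(v-1)/2\rfloor\le 6$ so the dichotomy gives $\beta(1,v)\le 7$, while $D(v)\ge D(7)=7$ (adjoining an isolated point shows $D$ is nondecreasing), and the Fano plane on $7$ of the $v$ points is an intersecting PSTS$(v)$ with $7$ blocks. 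For $v\ge 15$, $\max\{7,\lfloor(v-1)/2\rfloor\}=\lfloor(v-1)/2\rfloor$ and $D(v)\ge\lfloor(v-1)/2\rfloor$, so the upper bound is $\lfloor(v-1)/2\rfloor$; taking a point $x$ and a near-perfect matching of $X\setminus\{x\}$, each edge enlarged by $x$, gives an intersecting PSTS$(v)$ meeting it.

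The only genuinely nonroutine part is the bound $|\B|\le 7$ in the no-common-point case, which amounts to the fact that a maximal intersecting linear triple system without a common point sits inside a Fano plane. The care needed there is in organizing the case analysis so that linearity is applied correctly at each branch; once the forced configuration $B_1,B_2,B_3$ is fixed, the remaining blocks have almost no freedom, so the argument is short but should be written out explicitly.
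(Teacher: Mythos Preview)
Your proof is correct, and the constructions you give match the paper's exactly. The upper-bound argument, however, follows a genuinely different route. The paper obtains $\beta(1,v)\le\max\{7,\lfloor(v-1)/2\rfloor\}$ simply by specializing its general bound (Theorem~\ref{rho.thm}, which rests on the Lindner--Phelps Lemma~\ref{tx.lem}) to $\rho=1$, and then handles $3\le v\le 6$ by ad hoc inspection. You instead prove the same inequality directly from a structural dichotomy for intersecting linear triple systems: either all blocks share a point (giving the near-matching bound $\lfloor(v-1)/2\rfloor$), or the system embeds in a Fano configuration (giving $\le 7$). You then use $\beta(1,v)\le D(v)$ from Theorem~\ref{packing.thm} to settle the small cases $v\le 6$ uniformly, which is a bit cleaner than the paper's case-by-case treatment.

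What each approach buys: the paper's proof is shorter in context because the Lindner--Phelps machinery is already in place for the general $\rho$ bound, so $\rho=1$ drops out for free. Your argument is self-contained and more informative structurally---it actually identifies the extremal intersecting systems (a star or the Fano plane) rather than merely bounding their size---and it does not rely on any of the paper's earlier lemmas. The case analysis you sketch after fixing $B_1,B_2,B_3$ is correct; linearity against $B_1,B_2,B_3$ forces each further block into one of the four shapes $\{a,f,\ast\}$, $\{b,e,\ast\}$, $\{c,d,\ast\}$, $\{c,e,f\}$, and each shape admits at most one block, so $|\mathcal B|\le 7$ as claimed.
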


\begin{proof}
For any $v \geq 3$, we have 
\begin{equation}
\label{rho1.eq}
\beta(1,v) \leq \max \left\{7, \left\lfloor \frac{v-1}{2} \right\rfloor \right\}
\end{equation} from (\ref{upperbound.eq}).
When $v \geq 15$, (\ref{rho1.eq}) simplifies to $\beta(1,v) \leq \lfloor \frac{v-1}{2} \rfloor$.
Applying Theorem \ref{const.thm}, we see that $\beta(1,v) \geq \lfloor \frac{v-1}{2} \rfloor$ for all $v\geq 3$.
Therefore $\beta(1,v) = \lfloor \frac{v-1}{2} \rfloor$ for $v \geq 15$.

For $v \leq 14$, (\ref{rho1.eq}) yields $\beta(1,v) \leq 7$.
For $7 \leq v \leq 14$, if we take the seven blocks of an STS$(7)$, we see
that $\beta(1,v) \geq 7$. Hence $\beta(1,v) = 7$ for $7 \leq v \leq 14$.

The cases  $3 \leq v \leq 6$ can be analyzed separately. The optimal PSTS$(v)$ with $\pc =1$ are as follows:
$\{123\}$ for $v = 3,4$; $\{123,145\}$ for $v = 5$; and $\{123,145,246,356\}$ for $v=6$.
\end{proof}

We have the following bounds on $\beta(2,v)$ from Theorem \ref{const.thm} and Corollary \ref{rho.cor}.

\begin{thm}
\label{beta2.thm}
For $v \geq 18$, it holds that
\[ v-3 \leq \beta(2,v) \leq v+1.\]
\end{thm}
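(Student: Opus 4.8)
The lower bound is immediate from Theorem \ref{const.thm}: with $\pc = 2$ and $D(2)=0$ one gets $\beta(2,v)\ge v-2$ for even $v$ and $\beta(2,v)\ge v-3$ for odd $v$, so $\beta(2,v)\ge v-3$ in every case. For the upper bound I would start from the counting bound, but note that Corollary \ref{rho.cor} with $\pc=2$ gives only $\beta(2,v)\le v+5$, so the real work is to remove the last few blocks. Fix a PSTS$(v)$ with a maximum partial parallel class $\mathcal{P}=\{B_1,B_2\}$, put $P=B_1\cup B_2$, and let $b$ be the number of blocks. Since $\mathcal{P}$ is maximum, every block meets $P$, and exactly two blocks (namely $B_1$ and $B_2$) meet $P$ in three points, so $b=2+m+n$ where $m$ is the number of blocks meeting $P$ in exactly two points and $n=\sum_{x\in P}t_x$ is the number meeting $P$ in exactly one point. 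The two basic estimates are $m\le 9$ (only $\binom{6}{2}-6=9$ pairs of $P$ are uncovered by $\mathcal{P}$) and, since $v\ge 18$, $n\le v-6$ by Lemma \ref{tx.lem}; together these just reproduce $b\le v+5$.

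To do better I would add two ingredients. First, for each $x\in P$ we have $d_x=1+t_x+m_x\le (v-1)/2$, where $m_x$ is the number of two-in-$P$ blocks through $x$; since every two-in-$P$ block meets $B_1$ in exactly one point and $B_2$ in exactly one point, summing $t_x+m_x$ over a suitable pair of points of $P$ controls $m+n$, and in fact gives $b\le v-1$ once one knows that all two-in-$P$ blocks pass through a fixed pair $\{a,b\}$ with $a\in B_1$, $b\in B_2$. Second, a completion trick in the spirit of Lindner--Phelps: if $a\in B_1$ and $b\in B_2$ have $t_a,t_b\ge 1$ and one can choose disjoint $A\in T_a$ and $B\in T_b$, then any two-in-$P$ block disjoint from $\{a,b\}$ would extend $\{A,B\}$ to a partial parallel class of size $3$, which is impossible — so in that situation every two-in-$P$ block passes through $a$ or $b$, forcing $m\le 5$. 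The proof then splits according to whether $B_1$ and $B_2$ fall into case 1 of Lemma \ref{tx.lem} (contributing at most $6$ each to $n$) or case 2 (at most one point of positive $t_x$). When at least one of the two blocks is in case 2, combining the degree bound at its heavy point with the completion trick (available once the heavy $t$-values are not too small) and with the crude bounds in the remaining ranges yields $b\le v+1$ for all $v\ge 18$, with a little numeric care at $v=18,19$.

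The main obstacle is the case in which \emph{both} $B_1$ and $B_2$ lie in case 1 of Lemma \ref{tx.lem}: there $n\le 12$ and $m\le 9$ only give $b\le 23$, so for $18\le v\le 21$ one has to use the fine structure. The plan is a dichotomy. Either the partial one-factors $T_x$ ($x\in P$) admit the disjoint pair $A\in T_a$, $B\in T_b$ needed for the completion trick — and then $m\le 5$, hence $b\le 19$ — or they do not, in which case the cross-intersection conditions (every edge of one $T_x$ meeting every edge of another, forced within each block by Lindner--Phelps and across the two blocks by the would-be size-$3$ extension) pin all the nonempty $T_x$ onto a common set of at most four external points; in the extremal case these, together with $B_i$, form a copy of the Fano plane on seven points, and in general $n\le\binom{4}{2}=6$, so $b\le 17$. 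Either way $b\le 19\le v+1$. Making this structural step precise — in particular identifying exactly when the required disjoint pair fails to exist, ruling out the degenerate possibility that the two four-point sets coincide by an edge count, and verifying that all the subcases stitch together into a uniform bound for every $v\ge 18$ — is where I expect the bulk of the effort to lie.
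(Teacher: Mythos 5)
Your lower bound is exactly the paper's: specialize Theorem \ref{const.thm} with $\pc=2$ and $D(2)=0$ to get $v-2$ or $v-3$ according to the parity of $v$. For the upper bound the paper gives no separate argument at all; it simply cites Corollary \ref{rho.cor}, and your observation that this substitution only yields $\beta(2,v)\le v+5$ is correct (Theorem \ref{rho.thm} gives $v+4$ for odd $v$, which matches the entry $31$ for $\pc=2$, $v=27$ in Table \ref{tab1}). So the constant $v+1$ in the stated theorem is not actually delivered by the paper's own cited tools, and the additional structural work you undertake is aimed at something the paper never proves.

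That additional work, however, has a genuine gap. The pivotal step --- ``if $A\in T_a$ and $B\in T_b$ are disjoint, then any two-in-$P$ block avoiding $\{a,b\}$ extends $\{A,B\}$ to a PPC of size $3$, hence every two-in-$P$ block meets $\{a,b\}$ and $m\le 5$'' --- is false as stated: such a block $C=\{x,y,w\}$ with $x\in B_1\setminus\{a\}$, $y\in B_2\setminus\{b\}$ has its third point $w\notin P$, and $w$ may coincide with one of the four external points of $A\cup B$, in which case $\{A,B,C\}$ is not a partial parallel class. Without $m\le 5$, the count $b\le 2+m+n\le 2+9+(v-6)=v+5$ is all you obtain, and the dichotomy in your final paragraph (either $m\le 5$, or all nonempty $T_x$ are confined to a common set of at most four external points) rests on this step together with structural claims you explicitly leave unverified. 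Repairing it would require, for instance, enough room in $T_a$ and $T_b$ (say $t_a,t_b\ge 3$, in the spirit of Lemma \ref{tx.lem}) to re-choose $A$ and $B$ avoiding $w$, plus a genuine treatment of the small-$t_x$ configurations; as written, the bound $v+1$ is not proved. If the goal is only to reproduce what the paper's argument supports, the correct statement to derive is $\beta(2,v)\le v+5$ (or $v+4$ for odd $v$), which follows in one line from Corollary \ref{rho.cor} or Theorem \ref{rho.thm}.
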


In a similar way, we can get bounds on $\beta(3,v)$.

\begin{thm}
\label{beta3.thm}
For $v \geq 21$, it holds that
\[ \frac{3v-10}{2} \leq \beta(3,v) \leq \frac{3v+33}{2}.\]
\end{thm}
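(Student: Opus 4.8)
The plan is to read off both bounds as direct specializations to $\pc = 3$ of the general results already proved: Corollary \ref{rho.cor} for the upper bound and Theorem \ref{const.thm} for the lower bound. The one preliminary fact I would record is the value $D(3) = 1$, which follows from Theorem \ref{packing.thm} by taking $v = 3$ (noting $3 \not\equiv 5 \bmod 6$), since $\lfloor \frac{3}{3}\lfloor \frac{2}{2}\rfloor\rfloor = 1$.

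For the upper bound, note that the hypothesis $v \geq 21$ is precisely the condition $v \geq 3\pc + 12$ of Corollary \ref{rho.cor} when $\pc = 3$. Substituting $\pc = 3$ into (\ref{simpleupperbound.eq}) and simplifying $3\left(\frac{6\cdot 3 + v - 7}{2}\right)$ gives $\beta(3,v) \leq \frac{3v+33}{2}$, as claimed.

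For the lower bound I would split on the parity of $v$ and invoke the matching part of Theorem \ref{const.thm}. If $v$ is odd, then $v - \pc$ is even, the side conditions $v \geq 3\pc$ and $(v,\pc) \neq (6,2)$ hold trivially for $v \geq 21$, and part~1 gives $\beta(3,v) \geq \frac{3(v-3)}{2} + D(3) = \frac{3v-7}{2} \geq \frac{3v-10}{2}$. If $v$ is even, then $v - \pc$ is odd, $v > 3\pc$ holds, and part~2 gives $\beta(3,v) \geq \frac{3(v-4)}{2} + D(3) = \frac{3v-10}{2}$. Combining the two parities yields $\beta(3,v) \geq \frac{3v-10}{2}$ for all $v \geq 21$.

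There is no genuine obstacle here, since all the work is done by the earlier theorems; the only point needing a moment's care is that the even case is the one that pins down the stated lower bound (the odd case being slightly stronger), so the bound $\frac{3v-10}{2}$ is what this argument delivers and no rounding subtleties intervene. This mirrors the reasoning behind Theorem \ref{beta2.thm}.
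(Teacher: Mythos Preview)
Your proposal is correct and follows exactly the approach the paper intends: the paper simply remarks ``In a similar way, we can get bounds on $\beta(3,v)$,'' meaning precisely the specialization of Corollary~\ref{rho.cor} and Theorem~\ref{const.thm} to $\pc=3$ that you have written out. Your computations, including $D(3)=1$ and the parity split for the lower bound, are accurate.
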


Suppose we fix a value of $\pc$ and suppose $v \geq  3\pc + 12$.
From Theorem \ref{const.thm} and Corollary \ref{rho.cor}, we have 
\[ \beta(\pc,v) \geq \beta_L(\pc,v) \triangleq \frac{\pc (v - \pc-1)}{2} + D(\pc)\] and 
\[ \beta(\pc,v) \leq \beta_U(\pc,v) \triangleq \pc \left( \frac{6\pc +v - 7}{2}  \right) .\]
Therefore, since $D(\pc) \in \Theta(\pc^2)$, it immediately follows that $\beta(\pc,v) \in \Theta(v)$ if $\pc$ is a constant.

The upper and lower bounds in Theorem \ref{beta2.thm} and Theorem \ref{beta3.thm} differ by a constant.
This in fact occurs for any fixed value of $\pc$, as we show now. 
We can bound the difference $\beta_U(\pc,v) -  \beta_L(\pc,v)$ as follows:
\begin{align*}
\beta_U(\pc,v) - \beta_L(\pc,v) &\leq \pc \left( \frac{6\pc +v - 7}{2}  \right) - \left( \frac{\pc (v - \pc-1)}{2} + D(\pc) \right)\\
& = \frac{7 \pc^2 - 6\pc}{2} - D(\pc).
\end{align*}
Now, from Theorem \ref{packing.thm}, it can be verified that
\[ D(v) \geq \frac{v(v-2) - 2}{6}.\]
Hence, we have 
\begin{align}
\nonumber \beta_U(\pc,v) - \beta_L(\pc,v) &\leq \frac{7 \pc^2 - 6\pc}{2} - \frac{\pc(\pc-2) - 2}{6}\\
\label{diff.eq} & = \frac{10 \pc^2 - 8\pc +1}{3}.
\end{align}
Therefore,  the difference between our upper and lower bounds is at most a constant
(which depends on $\pc$).

On the other hand, if  $\pc = v/c$  where $c \geq 3$ is a constant, then 
it follows immediately from Theorem \ref{const.thm} and Corollary \ref{rho.cor} that 
$\beta\left(\frac{v}{c},v\right) \in \Theta(v^2)$.  

Finally, it is of course easy to use Theorems \ref{const.thm} and \ref{rho.thm} to 
find upper and lower bounds on $\beta(\pc,v)$ for any specified value of $v$. 
We illustrate by tabulating these bounds for $v= 27$ in Table \ref{tab1}. 
Theorem \ref{rho.thm} gives a nontrivial upper bound on $\beta(\pc,27)$ for $1 \leq \pc \leq 4$. For
$5 \leq \pc \leq 8$, we only have the trivial upper bound $\beta(\pc,27) \leq 27 \times 26 / 6 = 117$.
However, $\beta(9,27) = 117$ from Theorem \ref{max.thm}. 

We can also show that $\beta(8,27) = 117$. The construction given by Bryant and Horsley in \cite{BH} yields an STS$(27)$ that does not contain a parallel class. It turns out to be possible to find a PPC of size $8$ in this STS$(27)$.  We do not give a complete description of how the design is constructed. However, we note that the point set of the design is 
$(\zed_5 \times \zed_5) \cup \{\infty_1,\infty_2\}$ and the blocks include all subsets $\{x,y,z\}$ 
of three distinct elements in
$\zed_5 \times \zed_5$ such that $x + y + z = (0,0)$.
The following eight disjoint triples are therefore blocks in this STS$(27)$:
\[
\begin{array}{ll}
\{10,11,34\}\quad 	&	\quad \{01,31,23\}\\
\{20,22,13\}\quad 	&	\quad \{02,12,41\}\\
\{30,33,42\}\quad 	&	\quad \{03,14,43\}\\
\{40,44,21\}\quad 	&	\quad \{04,32,24\}.
\end{array}
\]
These  blocks comprise a PPC of size eight.

\begin{table}
\caption{Upper and lower bounds on $\beta(\pc,27)$}
\label{tab1}
\[
\begin{array}{c|c|c|c}
\pc & D(\pc) & \text{lower bound} & \text{upper bound} \\ \hline
1 & 0 & 13 & 13 \\
2 & 0 & 24 & 31\\
3 & 1 & 37 & 57\\
4 & 1 & 45 & 86\\
5 & 2 & 57 & 117\\
6 & 4 & 64 & 117\\
7 & 7 & 77 & 117\\
8 & 8& 117 & 117 \\
9 & 12 & 117 & 117 
\end{array}
\]
\end{table}

 \section{Sequencings of PSTS$(v)$}
 \label{sequencing.sec}
 
 
A PSTS$(v)$, say $\STS= (X,\B)$, is \emph{sequenceable}
if there a permutation $\pi$ of $X$ such that no $3t$ consecutive points in $\pi$ 
is the union of $t$ blocks in $\B$, for all $t$ such that $1 \leq t \leq  \lfloor \frac{v}{3}\rfloor$.
The question of determining which PSTS$(v)$ are sequenceable was introduced by Alspach \cite{Als}. 
This problem has been further studied in \cite{AKP,BE,KS}. It has been shown that a PSTS$(v)$ is 
sequenceable if the size of the 
maximum PPC satisfies certain conditions.  The known results are summarized in the following theorem.
 
 \begin{thm}
 \label{sequenceable.thm}
 Suppose $\STS= (X,\B)$ is a PSTS$(v)$.  Then $\STS$ is sequenceable if any of the following 
 conditions is satisfied:
 \begin{enumerate}
 \item The size of a maximum partial parallel class in $\STS$ is at most three.
 \item The size of a maximum partial parallel class in $\STS$ is $\pc$ and $v \geq 15\pc - 5$.
 \item The size of a maximum partial parallel class in $\STS$ is $\pc$ and $v \geq 9\pc + 22 \pc^{2/3} +10$.
 \end{enumerate}
 \end{thm}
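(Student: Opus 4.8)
The plan is not to reprove these facts from scratch but to assemble them from the cited references \cite{AKP,BE,KS}; below I sketch the probabilistic engine that drives all three parts. The key preliminary reduction is that every ``bad'' window has length $3t$ for some $t$ with $1 \le t \le \pc$: if $3t$ consecutive points of a permutation $\pi$ form a union of $t$ blocks, those $t$ blocks are pairwise disjoint and hence constitute a PPC, so $t \le \pc$. Thus a permutation $\pi$ of $X$ fails to be a sequencing precisely when, for some $t \le \pc$ and some set $\mathcal{Q} \subseteq \B$ of $t$ pairwise disjoint blocks, the $3t$ points covered by $\mathcal{Q}$ occupy $3t$ consecutive positions of $\pi$.

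For items 2 and 3 I would take $\pi$ uniformly at random. A fixed $t$-element family $\mathcal{Q}$ of disjoint blocks spans $3t$ fixed points, and the probability those points occupy some block of $3t$ consecutive positions is exactly $(v-3t+1)\,(3t)!\,(v-3t)!\,/\,v!$; summing over the at most $\binom{b}{t}$ choices of $\mathcal{Q}$ and over $1 \le t \le \pc$ gives a union bound on the failure probability, and $b$ is in turn bounded through Theorem \ref{rho.thm}. The $t=1$ contribution equals $6b/(v(v-1))$ and is the bottleneck; the higher-$t$ terms are lower order once $v$ is large relative to $\pc$. Checking that the whole sum is below $1$ under the crude hypothesis $v \ge 15\pc - 5$ is then a term-by-term estimate. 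The sharper hypothesis $v \ge 9\pc + 22\pc^{2/3} + 10$ is beyond what the plain union bound delivers when $\pc$, hence possibly $b$, grows with $v$; there one replaces the union bound on the short windows by a more economical device --- exposing $\pi$ three positions at a time and applying the symmetric Lov\'asz Local Lemma to the events ``window $i$ is bad'', or a semi-random/alteration scheme --- and the $\pc^{2/3}$ error term is the usual fingerprint of such arguments (compare Brouwer's bound recalled in the introduction).

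I expect the main obstacle to be exactly this refinement: controlling the dependency degree in the local-lemma version (equivalently, the number of triples one must surgically move in an alteration argument) tightly enough to hit the stated $9\pc + 22\pc^{2/3} + 10$ threshold rather than a weaker linear bound --- this bookkeeping is the technical core of \cite{BE} and \cite{KS}. Finally, item 1 ($\pc \le 3$) needs a separate endpoint treatment, since the asymptotic argument only covers $v$ above an absolute constant: the finitely many remaining small orders are dispatched either by an explicit sequencing or by exhaustive search, and this uniform statement for all $v$ with $\pc \le 3$ is what \cite{AKP} establishes.
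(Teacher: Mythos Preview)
The paper's own proof is a bare citation: items 1 and 2 are attributed to \cite{AKP} and item 3 to \cite{BE}, with no argument whatsoever. Your proposal therefore does not match the paper --- you have volunteered a substantive probabilistic sketch where the paper offers none.

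On the substance of your sketch, two concrete problems. First, you repeatedly invoke \cite{KS} as carrying the ``technical core'' of item 3, but in this paper \cite{KS} is the Kreher--Stinson paper on \emph{nonsequenceable} Steiner triple systems (see the paragraph immediately following the theorem); item 3 is credited solely to Blackburn--Etzion \cite{BE}. Second, the claim that a single ``probabilistic engine drives all three parts'' does not reflect the cited sources. The arguments in \cite{AKP} for items 1 and 2 are constructive: one builds the sequencing greedily, and the threshold $v\ge 15\pc-5$ comes from a pigeonhole count showing a partial sequencing can always be extended without creating a forbidden window --- not from a first-moment calculation. The refinement in \cite{BE} is likewise deterministic, and the $\pc^{2/3}$ term there is not the ``fingerprint'' of a Local Lemma or semi-random argument. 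Your sketch also leans on Theorem~\ref{rho.thm} of the present paper to control $b$, which the earlier papers \cite{AKP,BE} obviously could not have used.

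That said, your first-moment heuristic for item 2 is not unreasonable as an \emph{alternative} route: with $b$ bounded via Corollary~\ref{rho.cor}, the $t=1$ term $6b/(v(v-1))$ is indeed roughly $0.28$ at $v\approx 15\pc$, and the higher-$t$ terms decay. If you want to pursue that, it would be an independent contribution, but it should be presented as such rather than as a summary of \cite{AKP} and \cite{BE}.
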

 
 \begin{proof}
 1.\ and 2.\ are shown in Alspach,  Kreher and  Pastine \cite{AKP} and 3.\ is proven in Blackburn and Etzion \cite{BE}.
 \end{proof}
 
On the other hand, it is known that there is a nonsequenceable 
STS$(v)$ for all $v \equiv 1 \bmod 6$, $v > 7$; see Kreher and Stinson \cite{KS}. 
There are currently no known examples of nonsequenceable 
STS$(v)$ for $v \equiv 3 \bmod 6$.

Theorems \ref{construction1}, \ref{construction2} and \ref{construction3}  can be used to construct examples of PSTS$(v)$ 
that satisfy the hypotheses of Theorem \ref{sequenceable.thm}. Hence, 
the resulting PSTS$(v)$ are necessarily sequenceable. 
In this way, we can obtain sequenceable PSTS$(v)$ with $b \in \Theta(v)$ for $\pc \leq 3$, as well as
sequenceable PSTS$(v)$ with $b \in \Theta(v^2)$ for $\pc  \in \Theta(v)$.
 
On the other hand, Corollary \ref{rho.cor} shows that a PSTS$(v)$ with a sufficiently large number of blocks cannot
satisfy the hypotheses of Theorem \ref{sequenceable.thm}. (Of course this not mean that such a PSTS$(v)$ is nonsequenceable.)

To illustrate, suppose  $\pc = 3$ and $v \geq 21$. Then Theorem \ref{beta3.thm}  asserts that 
$\beta(3,v) \leq \frac{3v+33}{2}$. Hence, a PSTS$(v)$ with $v \geq 21$ having a maximum PPC of size $3$ has at most  $\frac{3v+33}{2}$ blocks. Therefore, Theorem \ref{sequenceable.thm} cannot be applied with $\pc = 3$ if the number of blocks in a PSTS$(v)$ is greater than this number. For example, if $v = 21$, we conclude that a PSTS$(21)$ with at least $49$ blocks must have a PPC of size at least $4$ and therefore the hypotheses of Theorem \ref{sequenceable.thm} are not satisfied for such a PSTS. 
On the other hand, from Theorem \ref{construction2}, there is a PSTS$(21)$ with $b = 28$ blocks in which the maximum PPC has size $3$. From Theorem \ref{sequenceable.thm}, this PSTS$(21)$ is sequenceable.

\section{Summary}
\label{summary.sec}

It would be of interest to obtain tighter bounds on the values $\beta(\pc,v)$. 
However, our upper and lower bounds on $\beta(\pc,v)$ are already relatively close. It is not difficult to see why this is the case.
Our main construction, Theorem \ref{construction2}, produces an STS$(v)$ in which condition 2.\ of Lemma \ref{tx.lem} is met with equality, for all $\pc$ blocks in $\mathcal{P}$. Therefore, our upper and lower bounds differ only because of blocks that contain two or three points from $P$. Our upper bound supposes that all such blocks 
(other than the blocks in $\mathcal{P}$) contain two points from $P$. On the other hand, our construction only includes blocks that contain three points from $P$. If the gap between the upper and lower bounds is to be decreased, this would be the area of focus.

Aside from trying to tighten the upper and lower bounds in general, it would be of particular interest to find nontrivial upper bounds on $\beta(\pc,v)$ when $\pc \geq (v+3)/6$.

\section*{Acknowledgements}
I would like to thank Simon Blackburn and Daniel Horsley for helpful comments.

\end{document}